\title[Rationality of Fano threefolds]{Rationality of Fano threefolds of degree $18$ over nonclosed fields}
\author{Brendan Hassett and Yuri Tschinkel}
\date{October 29, 2019}
\newcommand{\bC}{\mathbb C}
\newcommand{\bF}{\mathbb F}
\newcommand{\bP}{\mathbb P}
\newcommand{\bZ}{\mathbb Z}
\newcommand{\cD}{\mathcal D}
\newcommand{\cH}{\mathcal{H}}
\newcommand{\cJ}{\mathcal J}
\newcommand{\cO}{\mathcal O}
\newcommand{\cX}{\mathcal X}
\newcommand{\ra}{\rightarrow}
\newcommand{\Bl}{\operatorname{Bl}}
\newcommand{\Br}{\operatorname{Br}}
\newcommand{\Pic}{\operatorname{Pic}}
\newcommand{\spa}{\operatorname{span}}
\theoremstyle{plain}
\theoremstyle{plain}
\newtheorem{prop}{Proposition}
\newtheorem{theo}[prop]{Theorem}
\newtheorem{coro}[prop]{Corollary}
\theoremstyle{definition}
\newtheorem{ques}[prop]{Question}
\begin{document}

\maketitle

\section{Introduction}
Manin \cite{Manin3fold} proposed to study (uni)rationality
of Fano threefolds over nonclosed fields, in situations where geometric
(uni)rationality is known.
In cases where the Picard group is generated
by the canonical class, i.e., those of rank and index one, he assigned an
`Exercise' \cite[p.~47]{Manin3fold} to explore the rationality of 
degree $12, 16, 18$, and $22$. See \cite[p.~215]{IskPro} for a list of
geometrically rational Fano threefolds of rank one. 

We have effective criteria for deciding the rationality of surfaces 
over nonclosed fields -- the key invariant is the Galois action on the
geometric Picard group. This invariant is trivial for Fano threefolds
considered above.  

Kuznetsov and Prokhorov are pursuing the rationality 
question from the perspective of
derived categories, relating the derived category of the threefold to 
categories of twisted sheaves on auxillary curves. Here we take a 
more geometric approach based on the analysis of torsors over the
intermediate Jacobians presented in \cite{HTcycle,BW}. Throughout, we work over
a field $k$ of characteristic zero. Our main result is:
\begin{theo} \label{theo:main}
Let $X$ be a smooth Fano threefold of degree $18$ defined over $k$ and
admitting a $k$-rational point. Then $X$ is rational over $k$ if and only if
$X$ admits a 
conic over $k$.
\end{theo}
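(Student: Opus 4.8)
The plan is to prove both implications, drawing throughout on the classical description of $X$ over $\bar k$. The intermediate Jacobian $\mathrm{IJ}(X)$ is a principally polarized abelian surface, hence over any field is either the Jacobian $J(\Gamma)$ of a smooth genus-$2$ curve $\Gamma$ canonically associated to $X$ (the generic case) or a polarized product of elliptic curves. Since $X$ is rationally connected, the Abel--Jacobi map identifies $\mathrm{CH}^2(X_{\bar k})_{\mathrm{alg}}$ with $\mathrm{IJ}(X)(\bar k)$; as all conics on $X_{\bar k}$ are algebraically equivalent, their classes form a torsor $T$ under $\mathrm{IJ}(X)$, defined over $k$, and the Hilbert scheme of conics $\cH(X)$ carries an Abel--Jacobi morphism $\cH(X)\to T$ whose degree and fiber structure over $\bar k$ I will need to pin down (I expect $\cH(X)$ to be birational to $T$). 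A conic on $X$ over $k$ visibly gives a $k$-point of $\cH(X)$, hence of $T$; the heart of the matter is the converse and its relation to rationality.

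For ``conic $\Rightarrow$ rational'': given a conic $C\subset X$ over $k$, I would blow up $C$ and run the relative minimal model program on $\Bl_C X$; over $\bar k$ this realizes the known $V_{18}$ rationality construction in conic-centered form and terminates in a geometrically rational Mori fiber space $Y\to B$ with $B$ rational and fibers of small degree. Since $C$ and every step of the program are Galois-equivariant, the birational map $X\dashrightarrow Y$ is defined over $k$, and it remains to deduce $k$-rationality of $Y$. As $\mathrm{IJ}(X)\neq 0$ the fibration is nontrivial, so this does not follow from a $k$-point alone; instead I would use $C$ and the hypothesis $X(k)\neq\emptyset$ to produce a section of $Y\to B$ and a $k$-point of $B$, and combine this with the triviality of the Galois action on the geometric Picard groups of $B$ and of the fibers, together with the smallness of the fiber degree, to conclude that $B$ and the generic fiber of $Y\to B$, hence $Y$ itself, are $k$-rational.

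For ``rational $\Rightarrow$ conic'': if $X$ is $k$-rational, the obstructions of Benoist--Wittenberg and Hassett--Tschinkel \cite{HTcycle,BW}, coming from intermediate Jacobians and codimension-$2$ cycles, apply, and for $V_{18}$ they force the torsor $T$ of conic classes to be trivial, i.e.\ to possess a $k$-point. I would then promote such a point to an honest conic over $k$: over $\bar k$ the corresponding class is represented by an actual conic, and, the morphism $\cH(X)\to T$ being understood, a descent argument---controlling the residual Brauer-type obstruction on the relevant fiber of $\cH(X)\to T$, which is annihilated using $X(k)\neq\emptyset$---yields a $0$-dimensional Galois-stable family of conics, hence a conic over $k$. (When $\mathrm{IJ}(X)$ is a non-split restriction of scalars of an elliptic curve one checks instead that $T$ has no $k$-point, consistently with $X$ then being neither $k$-rational nor endowed with a conic over $k$.)

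The main obstacle is this last step of ``rational $\Rightarrow$ conic'': converting the abstract triviality of $T$ into the concrete existence of a conic on $X$ over $k$. It requires a precise analysis over $\bar k$ of the Hilbert scheme of conics on $V_{18}$ and of its Abel--Jacobi morphism to the torsor under $\mathrm{IJ}(X)$, followed by a delicate descent that must handle the secondary Brauer obstruction and the several isomorphism classes of $\mathrm{IJ}(X)$ as a polarized abelian surface over $k$. The other implication should be more routine; its one subtlety is verifying that the $\bar k$-construction is genuinely conic-driven, so that it runs over $k$ starting from a conic alone---without a line over $k$, which $X$ need not possess.
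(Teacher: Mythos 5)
Your ``conic $\Rightarrow$ rational'' direction contains the real gap. Blowing up the conic $D$ and running the MMP yields (as recalled in Section~\ref{subsect:conic}) the conic bundle $\widetilde{X}^+ \to \bP^2$ with quartic discriminant; but your plan to ``use $C$ and $X(k)\neq\emptyset$ to produce a section'' has no mechanism behind it: a single $k$-point gives a point of one fiber, not a rational section, and a conic bundle over $\bP^2$ degenerating along a quartic is \emph{not} $k$-rational merely because the Galois action on the geometric Picard groups of base and fibers is trivial --- its rationality over $k$ is governed exactly by the intermediate Jacobian torsor, i.e.\ by the thing you are trying to prove. What the point-plus-conic configuration does give directly is a pencil of quartic del Pezzo fibrations with a section, hence \emph{unirationality} (and even that has issues at cusps of the discriminant). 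The paper's actual route is different: use the point $x$ for Takeuchi's triple projection to a sextic del Pezzo fibration over $\bP^1$ (Proposition~\ref{prop:Takeuchi}), use the conic as a multisection of degree $2$, coprime to the canonical trisection, so the generic fiber is rational by Manin's theorem (Proposition~\ref{prop:genericityrat}); and --- crucially --- invoke the specialization theorem of \cite{KTspecialize} over the smooth connected parameter space of triples $(X,x,D)$ (Theorem~\ref{theo:speciality}) to dispose of all special configurations (the point lying on a line, conics through $x$, degenerate double projections, etc.) at once. Your sketch addresses neither the needed section nor these degenerate cases, and handling them directly is precisely the case-by-case analysis the paper is engineered to avoid.

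For ``rational $\Rightarrow$ conic'' your framework agrees with the paper's: rationality forces, via \cite[Th.~22]{HTcycle}, the torsor $P$ to be $\Pic^0(C)$ or $\Pic^1(C)$ for the genus-two curve $C$ with $J(C)\simeq IJ(X)$. But the ``delicate descent'' and ``secondary Brauer-type obstruction'' you anticipate do not arise: by \cite{IlMa} and \cite{KuzPro} the variety of conics $R_2(X)$ is geometrically an abelian surface isomorphic to $J(C)$, and the Abel--Jacobi map identifies $R_2(X)\simeq P_2$ with $[P_2]=2[P]$; since $\Pic^0(C)\simeq\Pic^2(C)$ in genus two, $P_2\simeq J(C)$ is trivial in either case, so $R_2(X)$ has a $k$-point, which \emph{is} a conic over $k$ --- no residual obstruction, and no use of $X(k)\neq\emptyset$ is needed for this implication. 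Your worry about $IJ(X)$ being a (possibly non-split) product of elliptic curves is likewise moot, since it is always the Jacobian of a genus-two curve here.
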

Here a conic means a geometrically connected curve of degree two -- possibly
nonreduced or reducible.

Much recent work on rationality has focused on applications of specialization
techniques to show the {\em failure} of (stable) rationality
\cite{VoisinInv,CTP,HTK,Totaro,HPTActa,Schreieder,NS,KTspecialize}. Here we use it to 
{\em prove} rationality, avoiding complicated case-by-case arguments for special 
geometric configurations; see Theorem~\ref{theo:speciality}. This technique
was also used to analyze rationality for cubic fourfolds \cite{RS}.

\

{\bf Acknowledgments:} The first author was partially supported by NSF grant
1701659 and the Simons Foundation.

\section{Projection constructions}
Let $X$ be a smooth Fano threefold of degree $18$ over $k$.

\subsection{Projection from lines}
The variety of lines $R_1(X)$ is nonempty and connected of pure 
dimension one \cite[Prop.~4.2.2]{IskPro}
and sweeps out a divisor in $X$ with class $-3K_X$ \cite[Th.~4.2.7]{IskPro}.
For generic $X$, $R_1(X)$ is a smooth curve of genus 
ten \cite[Th.4.2.7]{IskPro}.
If $R_1(X)$ is smooth then $X$ admits no nonreduced conics 
\cite[Rem.~2.1.7]{KuzPro}.

Suppose that $\ell \subset X$ is a line and write $\widetilde{X}$
for the blow-up of $X$ along $\ell$. Then double projection along
$\ell$ induces a birational map
$$\widetilde{X} \dashrightarrow {\widetilde{X}}^+ \ra Y,$$
where $Y\subset \bP^4$ is a smooth quadric hypersurface \cite[Th.~4.3.3]{IskPro}.
This flops the three lines incident to $\ell$ and contracts a divisor
$$D\in |-2K_{{\widetilde{X}}^+} - 3 E^+|$$
to a smooth curve $C\subset Y$ of degree seven and genus two.

Since $Y$ admits a $k$-rational point it is rational over $k$; the
same holds true for $X$.

\begin{prop}
If $X$ is a Fano threefold of degree $18$ admitting a line over $k$
then $X$ is rational.
\end{prop}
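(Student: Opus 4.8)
The plan is to carry out the Iskovskikh--Prokhorov double projection over $k$ itself and thereby reduce the rationality of $X$ to that of a quadric threefold with a rational point. First I would note that a line $\ell\subset X$ defined over $k$ is a degree-one curve in the anticanonical embedding, hence a genuine line in projective space, so $\ell\cong\bP^1_k$ and $\ell(k)$ is Zariski dense in $\ell$. Blowing up, $\widetilde X=\Bl_\ell X$ is defined over $k$, and its exceptional divisor $E=\bP(N_{\ell/X})$ is a $\bP^1$-bundle over $\bP^1_k$, i.e.\ a Hirzebruch surface over $k$; its fiber over any point of $\ell(k)$ is a copy of $\bP^1_k$, so $E(k)$ is Zariski dense in $E$ and $E$ is $k$-rational.

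Next I would verify that the chain $\widetilde X\dashrightarrow\widetilde X^+\ra Y$ recalled above descends to $k$. This is a Galois-descent statement: the linear system governing the double projection is canonically attached to the pair $(X,\ell)$ and hence $\Gal(\bar k/k)$-stable because $X$ and $\ell$ are; the flop of the three incident lines and the divisorial contraction of $D$ to the curve $C\subset Y$ are determined by the same $\bar k/k$-invariant data, so they too are defined over $k$. In particular $Y$ is a smooth quadric hypersurface in $\bP^4_k$, and the indeterminacy locus of the birational map $\widetilde X\dashrightarrow Y$ is contained in the union of the three flopped lines; being one-dimensional, it meets the surface $E$ in at most finitely many points.

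It then remains to produce a $k$-point on $Y$. Since $E(k)$ is Zariski dense in $E$, it is not contained in the finite set where $\widetilde X\dashrightarrow Y$ is undefined, so we may choose a $k$-point of $E$ at which the map is defined; its image is a $k$-point of $Y$. A smooth quadric threefold over $k$ carrying a $k$-rational point is $k$-rational, by projection from that point. Hence $Y$ is $k$-rational, and since $X$ is $k$-birational to $Y$ through $\widetilde X$ and $\widetilde X^+$, $X$ is $k$-rational.

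The only genuinely delicate step is the middle one: confirming that the entire Iskovskikh--Prokhorov construction, which is presented geometrically, is in fact defined over the ground field. I expect the care to lie in checking Galois-invariance of the relevant (sub)linear systems and of the minimal-model-program operations used in the construction; once that is in place, the rest of the argument is formal.
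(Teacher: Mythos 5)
Your proposal is correct and follows essentially the same route as the paper: the double projection along $\ell$ (which descends to $k$ since the relevant linear system is Galois-stable) exhibits $X$ as $k$-birational to a smooth quadric threefold $Y$, which has a $k$-point and is therefore rational. Your explicit production of the $k$-point on $Y$ via the dense $k$-points of $\ell\cong\bP^1_k$ and of $E$ just fills in the detail the paper leaves implicit when it asserts that $Y$ admits a $k$-rational point.
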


\subsection{Projection from conics}
\label{subsect:conic}
We discuss the structure of the variety $R_2(X)$ of conics on $X$:
\begin{itemize}
\item{$R_2(X)$ is nonempty of pure dimension two \cite[Th.~4.5.10]{IskPro}.}
\item{$R_2(X)$ is geometrically isomorphic to the Jacobian of a 
genus two curve $C$ \cite[Prop.~3]{IlMa} \cite[Th.~1.1.1]{KuzPro}.}
\item{
Through each point of $X$ there pass finitely many conics
\cite[Lem.~4.2.6]{IskPro}; indeed, through a generic such point
we have nine conics \cite[2.8.1]{Takeuchi}.}
\item{Given a conic $D \subset X$, let $\widetilde{X}$ denote the
blowup of $X$ along $D$. Then double projection along $D$ induces
a fibration \cite[Cor.~4.4.3,Th.~4.4.11]{IskPro}
$$X \dashrightarrow \widetilde{X}^+ \stackrel{\phi}{\ra} \bP^2$$
in conics with quartic degeneracy curve.}
\end{itemize}

\subsection{Projection from points}
\label{subsect:projpoint}

We recall the results of Takeuchi \cite{Takeuchi} presented in
\cite[Th.~4.5.8]{IskPro}.
Let $\widetilde{X}$ denote the blowup of $X$ at $x$,
with exceptional divisor $E$.
\begin{prop} \label{prop:Takeuchi}
Suppose we have a point $x\in X(k)$ and let $\widetilde{X}$ denote
the blowup of $X$ at $x$. We assume that
\begin{itemize}
\item{$x$ does not lie on a line in $X$;}
\item{there are no effective divisors $D$ on $X$ such that
$(K_{\widetilde{X}}^2) \cdot D =0$.}
\end{itemize}
Then triple-projection from $x$ gives a fibration
$$X \stackrel{\sim}{\dashrightarrow} {\widetilde{X}}^+ \stackrel{\phi}{\ra} \bP^1$$
in sextic del Pezzo surfaces.
\end{prop}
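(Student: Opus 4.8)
The plan is to recall Takeuchi's construction over an algebraic closure $\bar k$, following \cite[Th.~4.5.8]{IskPro}, and then to observe that every step descends to $k$ because $x$ is a $k$-rational point. The substantive input — the Mori-theoretic classification of the resulting link — I would import wholesale rather than reprove; the only genuinely new point here is the descent, which is immediate.

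Write $H=-K_X$ and let $\pi\colon\widetilde X\to X$ be the blow-up of $x$, with exceptional divisor $E\cong\bP^2$ and $-K_{\widetilde X}=\pi^*H-2E$. For the strict transform $\widetilde C$ of an irreducible curve $C\subset X$ one has $(-K_{\widetilde X})\cdot\widetilde C=H\cdot C-2\,\mathrm{mult}_x(C)$; since $X$ has index one, the first hypothesis makes $-K_{\widetilde X}$ nef, and the only $(-K_{\widetilde X})$-trivial curves are the strict transforms of the conics through $x$, of which there are finitely many by \cite[Lem.~4.2.6]{IskPro}. Hence the anticanonical morphism $\widetilde X\to\bar X$ is a small contraction of a finite union of disjoint smooth rational curves; performing the associated flop, which exists and is unique, yields $\widetilde X^+$ with $-K_{\widetilde X^+}$ nef and big.

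Next I would run the minimal model program on $\widetilde X^+$ and appeal to Takeuchi's classification. A $K_{\widetilde X^+}$-negative extremal contraction exists, and for a Fano threefold of degree $18$ the case analysis reproduced in \cite[Th.~4.5.8]{IskPro} leaves, a priori, a birational contraction onto another Fano threefold, a conic bundle over a surface, or a del Pezzo fibration over $\bP^1$; the second hypothesis — no effective divisor $D$ on $X$ with $(K_{\widetilde X}^2)\cdot D=0$ — is what singles out the last of these, giving a Mori fibration $\phi\colon\widetilde X^+\to B$ with $B$ a smooth genus-zero curve whose general fibre is a del Pezzo surface. On $\widetilde X$ the fibre class is the proper transform of the anticanonical pencil $|\,{-K_X}-3x\,|$ — this is the triple projection — and a direct intersection computation gives $(-K_{\widetilde X})^2\cdot(\pi^*H-3E)=6$, so the general fibre is a sextic del Pezzo surface. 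I expect this Mori-theoretic analysis to be the main obstacle if one wanted a self-contained argument, but it is exactly the content of \cite{Takeuchi} and \cite[Th.~4.5.8]{IskPro}.

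It remains to descend. Because $x\in X(k)$, the morphism $\pi$, the threefold $\widetilde X$ and the divisor $E$ are defined over $k$; the anticanonical morphism $\widetilde X\to\bar X$ is intrinsically attached to $\widetilde X$ and hence $\Gal(\bar k/k)$-equivariant, and so is its unique flop, so $\widetilde X^+$ and the birational map $X\dashrightarrow\widetilde X^+$ are defined over $k$. Finally $\phi$ is the Mori fibre space structure on $\widetilde X^+$, again Galois-equivariant, and its base is canonically $\bP\!\left(H^0(\widetilde X,\pi^*H-3E)^\vee\right)$, a projective line over $k$ because that cohomology group is a two-dimensional $k$-vector space. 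Therefore $\phi\colon\widetilde X^+\to\bP^1_k$ is defined over $k$, which is the assertion.
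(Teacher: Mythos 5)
Your proposal is essentially the paper's approach: the paper gives no independent proof of this proposition but recalls it from Takeuchi as presented in \cite[Th.~4.5.8]{IskPro}, and your argument likewise imports the Mori-theoretic classification wholesale and adds the (immediate) observation that every step --- blow-up at the $k$-point $x$, the intrinsic anticanonical small contraction, its unique flop, and the fibration defined by the linear system $\pi^*(-K_X)-3E$ --- is canonical and therefore descends to $k$, with the base $\bP^1_k$ since it is the projectivization of a $k$-vector space of sections. One small inaccuracy: under the stated hypotheses it is not true that the only $(-K_{\widetilde{X}})$-trivial curves are the strict transforms of conics through $x$; as recorded in Section~\ref{subsect:projpoint}, quartics with a node at $x$ and sextics with a triple point at $x$ can also be contracted (and flopped), and these are excluded only for \emph{generic} $x$ by Proposition~\ref{prop:howgeneric}. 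Since you defer the actual classification of the link to \cite{Takeuchi} and \cite[Th.~4.5.8]{IskPro}, this overstatement does not affect the conclusion, but the sentence should be weakened accordingly.
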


We offer a more detailed analysis of double projection from a point 
$x\in X(k)$ not on a line. 
By \cite[\S~4.5]{IskPro} 
the projection morphism
$$\tilde{\phi}: \widetilde{X} \ra \bP^7$$
is generically finite onto its image $\overline{X}$
and the Stein factorization
$$ \widetilde{X} \stackrel{\phi'}{\ra} X' 
\stackrel{\overline{\phi}}{\ra} \overline{X}$$
yields a Fano threefold of genus six with canonical Gorenstein singularities.
The condition precluding effective divisors $D$
with $(K_{\widetilde{X}})^2 D=0$ means that $\overline{\phi}$ admits no
exceptional divisors.
The nontrivial fibers of $\phi'$ are all isomorphic to $\bP^1$'s, with
the following possible images in $X$:
\begin{enumerate}
\item{a conic in $X$ through $x$;}
\item{a quartic curve of arithmetic genus one
in $X$, spanning a $\bP^3$, with a singularity of multiplicity two at $x$;}
\item{a sextic curve of arithmetic genus two 
in $X$, spanning a $\bP^4$, with a singularity of multiplicity three at $x$.}
\end{enumerate}
Moreover, if $\phi'$ does not contract any surfaces then the exceptional
divisor $E$ over $x$ is embedded
in $\bP^7$ as a Veronese surface.

The quartic curves on $X$ with node at a fixed point
$x$ have expected dimension $0$. The sextic curves on $X$ with
transverse triple point at a fixed point $x$ have expected dimension
$-1$. Indeed, we have:
\begin{prop} \label{prop:howgeneric}
\cite[Prop.~4.5.1]{IskPro}
Retain the notation above.
For a generic $x\in X$ 
\begin{itemize}
\item{the quartic and sextic curves described above do not occur;}
\item{$\phi'$ is a small contraction;}
\item{the rational map $X \stackrel{\sim}{\dashrightarrow} {\widetilde{X}}^+$
factors as follows
\begin{enumerate}
\item{blow up the point $x$;}
\item{flop the nine conics through $x$;}
\end{enumerate}
}
\item{$\phi$ restricts to the proper transform $E^+$ of $E$
as an elliptic fibration associated with cubics 
based at nine points.}
\end{itemize}
\end{prop}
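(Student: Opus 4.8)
\emph{Proof sketch.} The plan is to assemble the four assertions from the structure of the Stein factorization $\widetilde{X}\stackrel{\phi'}{\ra}X'\stackrel{\overline{\phi}}{\ra}\overline{X}$ recorded above, invoking genericity of $x$ only to kill the degenerate fibers and the contraction of divisors. Throughout I use that $\tilde{\phi}\colon\widetilde{X}\ra\bP^{7}$ is the morphism attached to the linear system $|{-\pi^{*}K_X}-2E|$, so that a curve $\widetilde{Z}\subset\widetilde{X}$ is contracted by $\tilde{\phi}$ --- equivalently by $\phi'$ --- exactly when $({-\pi^{*}K_X}-2E)\cdot\widetilde{Z}=0$; for $\widetilde{Z}$ the strict transform of an irreducible curve $Z\subset X$ of degree $d=-K_X\cdot Z$ having multiplicity $m$ at $x$, this reads $d=2m$.

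First I would analyze the conics. Since $R_2(X)$ has pure dimension two and the universal conic $\cC\ra R_2(X)$ is a $\bP^{1}$-bundle, the evaluation morphism $\cC\ra X$ is generically finite of degree nine; hence over a dense open $U\subset X$ there pass exactly nine conics. As $x$ lies on no line these conics are irreducible, and after shrinking $U$ they are smooth and meet $E$ transversally in distinct points $p_1,\dots,p_9$. A conic $D$ satisfies $-K_X\cdot D=2$, so $N_{D/X}\cong\cO\oplus\cO$ for generic $D$; blowing up a point of $D$ then makes the strict transforms $\widetilde{D}_i\subset\widetilde{X}$ disjoint $(-1,-1)$-curves, each meeting $E$ transversally at $p_i$. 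These are precisely the curves contracted by $\phi'$ as fibers of type~(1).

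Next I would rule out fiber-types~(2) and~(3) over a generic $x$. By the displayed numerical criterion, a quartic of arithmetic genus one is contracted precisely when it has a double point at $x$, and a sextic of arithmetic genus two precisely when it has a triple point at $x$. Let $\mathcal{Q}$ and $\mathcal{S}$ denote the loci in the Hilbert scheme of $X$ parametrizing such quartics, resp.\ sextics (including reducible and nonreduced degenerations), and let $\mathcal{Q}^{\circ}\ra\mathcal{Q}$ and $\mathcal{S}^{\circ}\ra\mathcal{S}$ be the finite covers marking a double, resp.\ triple, point; the crux is that the forgetful maps $\mathcal{Q}^{\circ}\ra X$ and $\mathcal{S}^{\circ}\ra X$ are not dominant, the fibres over a general point having expected dimensions $0$ and $-1$, which is the dimension count underlying \cite[Prop.~4.5.1]{IskPro}. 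The same analysis shows that for generic $x$ no effective divisor $D\subset X$ satisfies $(K_{\widetilde{X}})^{2}\cdot D=0$, so $\phi'$ contracts no surface and, as noted above, $E$ is carried to a Veronese surface in $\bP^{7}$. Carrying out these estimates honestly for the degenerations that can occur --- pairs of conics tangent at $x$, a conic through $x$ meeting a line, and so on --- via normal bundle computations on $X$ is the main obstacle.

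Granting this, the conclusions follow. Over a generic $x$ the nontrivial fibers of $\phi'$ are precisely the nine disjoint $(-1,-1)$-curves $\widetilde{D}_1,\dots,\widetilde{D}_9$, so $\phi'$ is a small contraction and $\widetilde{X}\dashrightarrow\widetilde{X}^{+}$ is their flop; composing with the blow-down $\widetilde{X}\ra X$ exhibits $X\dashrightarrow\widetilde{X}^{+}$ as the blow-up of $x$ followed by the flop of the nine conics through $x$. Under this flop the strict transform $E^{+}$ of $E\cong\bP^{2}$ is the blow-up $\Bl_{p_1,\dots,p_9}\bP^{2}$, hence a rational elliptic surface; the genus-one fibration $\phi$ induces on it is necessarily its anticanonical pencil $|{-K_{E^{+}}}|=|\cO_{\bP^{2}}(3)-p_1-\cdots-p_9|$, the pencil of plane cubics through the nine points --- which are thereby forced to lie on a pencil of cubics --- completing the last assertion.
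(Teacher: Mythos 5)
There is a genuine gap, and it sits exactly at the heart of the statement. The paper does not reprove this proposition at all: it is quoted from \cite[Prop.~4.5.1]{IskPro} (going back to Takeuchi), so the ``proof'' in the paper is a citation. Your sketch attempts a proof from scratch, and the setup is fine --- the numerical criterion $d=2m$ for curves contracted by $|-\pi^{*}K_X-2E|$, the nine conics through a general point via the evaluation map from the universal conic over the abelian surface $R_2(X)$, and the flop/blow-up factorization once one knows the contraction is small. But the first two bullets (no quartics with a node at $x$, no sextics with a triple point at $x$, no contracted divisors, hence $\phi'$ small) are precisely what you defer, conceding that ``carrying out these estimates honestly \dots is the main obstacle.'' That is the content of the cited proposition, so as written the proposal does not prove the statement; it reduces it to itself.

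Moreover, the heuristic you offer in its place does not suffice even in principle for the quartic case: you claim the forgetful map $\mathcal{Q}^{\circ}\ra X$ is not dominant ``the fibres over a general point having expected dimension $0$,'' but an expected (or actual) fibre dimension of $0$ over a general point is perfectly consistent with dominance --- it would mean finitely many such quartics through a general $x$, which is exactly what must be excluded. (The paper itself records expected dimension $0$ for the quartics and nevertheless asserts, on the strength of \cite[Prop.~4.5.1]{IskPro}, that they do not occur for generic $x$.) To close the gap one must bound the actual dimension of the family of arithmetic-genus-one quartics (resp.\ genus-two sextics) on $X$ and show their singular points sweep out a proper closed subset, or reproduce the geometric argument of Iskovskikh--Prokhorov/Takeuchi; alternatively, simply cite \cite[Prop.~4.5.1]{IskPro} as the paper does. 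The final step also leans on an unproved identification: you assert the fibration $\phi|_{E^{+}}$ is ``necessarily'' the anticanonical pencil of $\Bl_{p_1,\dots,p_9}\bP^2$ without computing the class of the restricted fibre; this is a minor, fixable omission compared with the main one, but it should be justified (e.g.\ by intersecting the sextic del Pezzo fibre class with $E^{+}$) rather than asserted.
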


\section{Unirationality constructions}

In this section, we consider the following question inspired by
\cite[p.~46]{Manin3fold}:
\begin{ques} 
Let $X$ be a Fano threefold of degree $18$ over $k$.
Suppose $X(k) \neq \emptyset$. Is $X$ is unirational over $k$?
\end{ques}
From our perspective, unirationality is more delicate than rationality 
as we lack a specialization theorem for smooth families in this context.
We cannot apply the theorem of \cite{KTspecialize} -- as we do 
in the proof of Theorem~\ref{theo:speciality} -- to reduce to 
configurations in general position. 

The geometric constructions below highlight some of the issues that arise.

\subsection{Using a point}
\begin{prop} \label{prop:unirat3}
Let $X$ be a Fano threefold of degree $18$ over $k$ admitting
a point $x\in X(k)$ satisfying the condition in Proposition~\ref{prop:Takeuchi}.
Then $X$ is unirational over $k$ and rational points are Zariski dense.
\end{prop}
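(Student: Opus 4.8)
The plan is to exploit the fibration in sextic del Pezzo surfaces coming from Proposition~\ref{prop:Takeuchi}. Given $x \in X(k)$ satisfying those hypotheses, triple projection produces a $k$-birational model ${\widetilde{X}}^+ \stackrel{\phi}{\ra} \bP^1$ whose generic fiber $S$ is a del Pezzo surface of degree six over the function field $K = k(\bP^1)$. A del Pezzo sextic surface over any field becomes unirational (indeed rational) as soon as it has a rational point, and in fact del Pezzo sextics are always unirational over the base field regardless — but here I would want an actual $K$-point in order to pass to rational points of $X$ itself, not merely a unirational parametrization of each fiber. So the first and crucial step is to produce a section of $\phi$, or at least a multisection defined over $k$ of degree prime to the relevant obstruction, using the rational point $x$ we started with.

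\smallskip

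The natural source of a section is the exceptional divisor $E$ of the blowup of $X$ at $x$, together with the geometry recorded in Section~\ref{subsect:projpoint}: when $\phi'$ contracts no surfaces, $E$ maps isomorphically to a Veronese surface $E^+ \subset {\widetilde{X}}^+$, and by Proposition~\ref{prop:howgeneric} the restriction $\phi|_{E^+}$ is an elliptic fibration (cubics through nine points). An elliptic fibration over $\bP^1$ with a $k$-rational nine-point configuration has a distinguished section — or one can simply use that $E^+ \cong \bP^2_k$ is rational over $k$, so the composition $\bP^2_k \dashrightarrow E^+ \hookrightarrow {\widetilde{X}}^+ \to \bP^1$ shows that $\phi$ acquires a rational multisection coming from lines in $\bP^2$; choosing a pencil of lines through one of the nine base points, or a single general line, gives a section or low-degree multisection of $\phi$ defined over $k$. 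Pulling the sextic del Pezzo fibration back along such a section yields a del Pezzo sextic over $K$ with a $K$-rational point, hence $K$-unirational with a dense set of $K$-points; spreading out, rational points of $X$ are Zariski dense, and the total space of the unirational parametrization gives a dominant map from a rational variety, i.e.\ $X$ is unirational over $k$.

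\smallskip

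More explicitly, I would realize the unirational parametrization as follows. The del Pezzo sextic $S/K$ with a $K$-point $p$: if $p$ lies in general position, blowing it down repeatedly (or projecting) realizes $S$ as rational over $K$; even in degenerate position one knows a sextic del Pezzo with a smooth rational point is unirational over the ground field (it carries two conic bundle structures and a fibration into rational curves, and a rational point propagates along one of them). Composed with the rational base $\bP^1_k$, this exhibits a dominant rational map $\bP^1_k \times_{K} (\text{rational surface}) \dashrightarrow {\widetilde{X}}^+$, equivalently a dominant map from a rational threefold, so $X$ is unirational over $k$. Density of $X(k)$ follows because $K$-points of $S$ are dense and each specializes to give $k$-points of fibers over infinitely many closed points of $\bP^1_k$ (using that $k$ has characteristic zero, hence infinitely many points of every degree, together with the Lang--Nishimura type propagation along the unirational parametrization).

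\smallskip

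The main obstacle is the second step: guaranteeing that the section/multisection of $\phi$ produced from $E^+$ is genuinely defined over $k$ and, if it is only a multisection, controlling its degree so that the sextic del Pezzo over $K$ still acquires a rational point (a del Pezzo of degree six gains a rational point from a point of degree coprime to $6$, by a standard index-computation argument, but one must check the degree that actually arises). Equivalently, one must verify that the contraction $\phi'$ does not behave pathologically for our chosen $x$ — but the hypotheses of Proposition~\ref{prop:Takeuchi} (no line through $x$, no divisor $D$ with $(K_{\widetilde X})^2 \cdot D = 0$) are exactly what forces $\phi'$ to be small with $E^+$ a Veronese surface, so the geometry of Proposition~\ref{prop:howgeneric} is available and the argument closes.
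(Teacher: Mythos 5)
Your overall strategy---use the sextic del Pezzo fibration $\phi:\widetilde{X}^+\to\bP^1$ from Proposition~\ref{prop:Takeuchi} together with the divisor $E^+$---is the same as the paper's, but the step where you produce a section of $\phi$ (or a multisection of degree coprime to the relevant obstruction) is a genuine gap, and it is exactly the point where the paper does something different. A $k$-rational line $L$ in $E\simeq\bP^2$ meets each member of the cubic pencil in three points, so its proper transform is a \emph{trisection}, never a section; the nine base points correspond to the nine conics through $x$ and in general form a single Galois orbit, so they are not individually $k$-rational, and even a line through one of them would only give a bisection (the residual intersection with each cubic has degree two), not a section. Hence from $x$ alone the generic fiber $S$ over $k(\bP^1)$ only acquires a closed point of degree $3$, which is not coprime to $3$ (or $6$), so no index-type argument yields a $k(\bP^1)$-point. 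Note the sanity check: if your construction worked, the generic fiber would be rational over $k(\bP^1)$ and hence $X$ would be rational from a rational point alone, which is precisely what Theorem~\ref{theo:main} says should require a conic over $k$; the extra hypothesis of a curve of degree prime to three in Proposition~\ref{prop:genericityrat} exists for exactly this reason. Two side remarks: the claim that sextic del Pezzo surfaces are ``always unirational over the base field regardless'' is false (unirationality forces a rational point), and a Galois-stable nine-point configuration does not produce a ``distinguished section'' of the elliptic fibration over $k$.

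The missing idea is the paper's base-change trick: rather than seeking a section of $\phi$ over $\bP^1$, base change the family along the trisection itself. Since $L$ comes from a line in $E\simeq\bP^2$ defined over $k$, we have $L\simeq\bP^1$ over $k$, and $\widetilde{X}^+\times_{\bP^1}L\to L$ carries a tautological section, so its generic fiber is a sextic del Pezzo surface over $k(L)$ with a rational point, hence rational over $k(L)$ by Manin. The total space is then rational over $k$ and dominates $\widetilde{X}^+$ by a degree-three map, which gives unirationality (not rationality) of $X$ and density of $X(k)$; with this correction your parametrization and density discussion go through.
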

\begin{proof}
We retain the notation from Proposition~\ref{prop:Takeuchi}.
Note that the proper transforms of lines $L \subset E^+$ give trisections
of our del Pezzo fibration
$$\phi: {\widetilde{X}}^+ \ra \bP^1.$$
Basechanging to $L$ yields
$$\phi_L: {\widetilde{X}}^+ \times_{\bP^1}L \ra L,$$
a fibration of sextic del Pezzo surfaces with a section.
Thus the generic fiber of $\phi_L$ is rational over $k(L)$ by
\cite[p.~77]{Manin66}.
Since $L\simeq \bP^1$, the total space of the fibration is rational
over $k$. As it dominates ${\widetilde{X}}^+$, we conclude that
$X$ is unirational.
\end{proof}

If the rational points are Zariski dense then we can find one
where Proposition~\ref{prop:Takeuchi} applies.
However, if we are given only a single rational point on $X$
we must make
a complete analysis of degenerate cases as partly described in
Section~\ref{subsect:projpoint}. In addition, we must consider cases
where there exist lines over $\bar{k}$
passing through our given rational point.

For instance, consider the case where a single line
$x\in \ell \subset X$. To resolve the double projection
at $x$, we must take the following steps:
\begin{itemize}
\item{blow up $x$ to obtain an exceptional divisor $E_1\simeq \bP^2$;}
\item{blow up the proper transform $\ell'$ of the line $\ell$ 
with 
$$N_{\ell'}= \cO_{\bP^1}(-1) \oplus \cO_{\bP^1}(-2)$$
to obtain an exceptional divisor $E_2 \simeq \bF_1$.}
\end{itemize}
Let $E_1'$ and denote the proper transform of $E_1$ in the
second blowups.
The linear series resolving the double
projection is 
$$h - 2E'_1 - E_2$$
which takes $E'_1 \simeq \bF_1$ to a cubic scroll, 
$E_2$ to a copy of $\bP^2$, and the $(-1)$-curve on $E_2$ to an
ordinary singularity on the image. 
The induced contraction
$$\phi': \widetilde{X} \ra X' \subset \bP^7$$
has degree
$$(h - 2E'_1 - E_2)^3 = 10.$$
Thus $X'$ admits a `degenerate Veronese surface' consisting of a 
cubic scroll and a plane meeting along a line
coinciding with the $(-1)$-curve of the scroll; $X'$ has an ordinary
singularity along that line.  

Of course, the most relevant degenerate cases for arithmetic purposes 
involve multiple lines through $x$ conjugated over the ground field. 
It would be interesting to characterize the possibilities.

\subsection{Using a point and a conic}
Here is another approach:
Let $X$ admit
a point $x\in X(k)$ and a conic $D\subset X$ defined over $k$.
The results recalled in Section~\ref{subsect:conic} imply that
$X$ is birational over $k$ to 
$$\phi: \widetilde{X}^+ \ra \bP^2,$$
a conic bundle degenerating over a plane quartic curve $B$. 

Suppose there exists a rational point on $\widetilde{X}^+$
whose image $p\in \bP^2$ is not contained in the degeneracy curve.
Consider the pencil of lines through $p$. The corresponding
pencil of surfaces on $\widetilde{X}^+$ are conic bundles
over $\bP^1$ with four degenerate fibers and the resulting
fibration admits a section. Such a surface is either
isomorphic to a quartic del Pezzo surface or birational to such 
a surface \cite[p.~48]{KST}. It is a classical fact that a quartic
del Pezzo surface with a rational point is unirational. This yields the unirationality of $X$ over $k$.

The argument works even when $p$ is a smooth point or node
of $B$. Here 
we necessarily have higher-order ramification over the nodes -- this is because
the associated generalized Prym variety is compact -- which
we can use to produce a section of the resulting pencil
of degenerate quartic del Pezzo surfaces. However, there is
trouble when $p$ is a cusp of $B$.

\section{Rationality results}
Our first statement describes the rationality construction under favorable
genericity assumptions:
\begin{prop} \label{prop:genericityrat}
Let $X$ be a Fano threefold of degree $18$ over $k$. Assume that:
\begin{itemize}
\item{there exists an $x\in X(k)$ satisfying the conditions of 
Proposition~\ref{prop:Takeuchi} so that $X$ is birational to a fibration
$\phi:{\widetilde{X}}^+ \ra \bP^1$
in sextic del Pezzo surfaces;}
\item{there exists an irreducible curve $M\subset X$, disjoint from the indeterminacy of
$X\stackrel{\sim}{\dashrightarrow} {\widetilde{X}}^+$, with degree prime to three.}
\end{itemize}
Then $X$ is rational over $k$.
\end{prop}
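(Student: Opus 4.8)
The plan is to reduce the statement to rationality over $k$ of the sextic del Pezzo fibration $\phi\colon\widetilde{X}^+\to\bP^1$, and then to produce on its generic fibre a zero-cycle of degree one, which forces that fibre -- hence $\widetilde{X}^+$, hence $X$ -- to be rational. By the first hypothesis it suffices to prove $\widetilde{X}^+$ rational over $k$, and since $\bP^1$ is rational this amounts to rationality over $K:=k(\bP^1)$ of the generic fibre $S_\eta$ of $\phi$, a sextic del Pezzo surface over $K$. Recall that $X\dashrightarrow\widetilde{X}^+$ is the triple projection from $x$: on the blow-up $\widetilde{X}=\Bl_xX$ it is given by the pencil of anticanonical divisors vanishing to order three at $x$, of class $H-3E$, where $H$ is the pullback of $-K_X$ and $E$ is the exceptional divisor, and $\phi$ is obtained by flopping the curves $C$ contracted by the small contraction $\phi'$ -- in the generic situation of Proposition~\ref{prop:howgeneric}, the strict transforms of the nine conics through $x$. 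Such $C$ satisfy $(-K_{\widetilde{X}})\cdot C=0$, so the flop leaves unchanged all intersection numbers $(-K)^{2}\cdot D$; a short intersection computation then identifies the fibre class of $\phi$ as $F=H-3E$ (with $H,E$ now denoting strict transforms on $\widetilde{X}^+$), satisfying $F^{2}=0$ and $(-K_{S_\eta})^{2}=(-K_{\widetilde{X}^+})^{2}\cdot F=6$.

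Next one transports the curve $M$. By the second hypothesis its strict transform $M^+\subset\widetilde{X}^+$ is an irreducible curve isomorphic to $M$, disjoint from $E^+$ and from the flopped curves; therefore $M^+\cdot E^+=0$ and $M^+\cdot H=\deg M$, whence
\[
M^+\cdot F = M^+\cdot(H-3E) = \deg M,
\]
a number prime to three, and in particular nonzero. Thus $M^+$ is not contained in a fibre of $\phi$: it is a genuine multisection of degree $\deg M$, so intersecting it with the generic fibre endows $S_\eta$ with a closed point of degree $\deg M$ over $K$ -- that is, with a zero-cycle of degree prime to three.

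It remains to remove the $2$-primary part of the obstruction, and for this one uses the exceptional surface. The strict transform $E^+$ is the blow-up of $E\cong\bP^{2}_{k}$ at the nine points where the conics through $x$ meet $E$, and by Proposition~\ref{prop:howgeneric} the fibration $\phi$ restricts on $E^+$ to an elliptic fibration in whose fibre class the nine exceptional $(-1)$-curves are sections. These nine curves are permuted by $\Gal(\bar{k}/k)$, so their union is a $\Gal$-stable multisection of $\phi$ of degree nine, and it cuts out on $S_\eta$ a zero-cycle of degree nine. Since $\gcd(\deg M,9)=1$, the surface $S_\eta$ carries a zero-cycle of degree one. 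Now a sextic del Pezzo surface over a field is rational as soon as it has a rational point \cite[p.~77]{Manin66}, and one with a zero-cycle of degree one does have a rational point -- a degree-one zero-cycle splits the Severi--Brauer surfaces attached to the two triangles of lines, whose classes in the Brauer group have orders dividing two and three. Hence $S_\eta$ is rational over $K$, so $k(\widetilde{X}^+)=K(u,v)$ is purely transcendental over $k$, and $X$ is rational over $k$.

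The step I expect to be the main obstacle is this last input -- that the obstruction to rationality for sextic del Pezzo surfaces is controlled by the index, so that a zero-cycle of degree one already suffices; equivalently, that the two multisections produced above can be spliced into an honest section of $\phi$, which is where the same difficulty is concentrated. A secondary point needing care is that when $x$ satisfies only the conditions of Proposition~\ref{prop:Takeuchi} and not the full genericity of Proposition~\ref{prop:howgeneric}, the curves contracted by $\phi'$ need not be nine smooth conics; one must then check that $E^+$ still furnishes a $\Gal$-stable multisection of $\phi$ whose degree is prime to three (as a cycle it remains equal to nine).
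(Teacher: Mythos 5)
Your proposal is correct in substance and follows the same strategy as the paper's proof: combine the multisection coming from $M$ (degree prime to three) with a multisection of $3$-power degree supported in $E^+$ to get a zero-cycle of degree one on the generic sextic del Pezzo fiber, then invoke the degree-six del Pezzo rationality criterion. Two points of comparison. First, the "main obstacle" you single out -- that index one suffices for a rational point, hence rationality, on a sextic del Pezzo surface -- is precisely what the paper's citation of \cite[p.~77]{Manin66} supplies; it is a known result, so no new argument is needed there (your sketch via the two associated Severi--Brauer varieties is the standard mechanism). Second, and more substantively, the paper gets its $3$-power multisection differently: as in Proposition~\ref{prop:unirat3}, the proper transform of a line $L\subset E\simeq \bP^2_k$ is a trisection of $\phi$, because $(H-3E)|_E=3h$; this works under the hypotheses of Proposition~\ref{prop:Takeuchi} alone. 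Your choice -- the nine exceptional $(-1)$-curves of $E^+\ra E$, i.e.\ the flopped conics -- presupposes the genericity of Proposition~\ref{prop:howgeneric}, which is not assumed in the statement, and the parenthetical fix in your closing caveat is not right as written: in degenerate cases the contracted curves may include the quartic and sextic types, the flopped curves are then bi- or trisections, and the degree of the cycle they cut on $S_\eta$ need not be nine (nor is nine "prime to three"). The gap is real but easily repaired by replacing your degree-nine multisection with the line-trisection above, which is exactly the paper's route; with that substitution your argument and the paper's coincide.
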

\begin{proof}
We saw in the proof of 
Proposition~\ref{prop:unirat3} that the generic fiber $S$ of $\phi$ is a 
sextic del Pezzo surface admitting a rational point of a degree-three
extension. Our assumptions imply that
$S\cdot M = \deg(M)$ which is prime to three, so applying \cite[p.~77]{Manin66}
we conclude that $S$ is rational over $k(\bP^1)$ and $X$ is rational 
over $k$.
\end{proof} 

We now show these genericity assumptions are not necessary:
\begin{theo} \label{theo:speciality}
Let $X$ be a Fano threefold of degree $18$ over $k$. Assume that $X$ admits a 
rational point $x$ 
and a conic $D$, both defined over $k$. Then $X$ is rational.
\end{theo}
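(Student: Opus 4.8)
The plan is to prove Theorem~\ref{theo:speciality} by a specialization argument, using Proposition~\ref{prop:genericityrat} to handle the generic situation and the theorem of \cite{KTspecialize} to descend rationality from a generic member of a family to the special fiber $X$. First I would set up the family: choose a versal (or at least sufficiently large) deformation $\cX \ra B$ of the pair $(X,D)$ over a base $B$ defined over $k$, with $X$ the fiber over a $k$-point $0\in B$ and with a relative conic $\cD \subset \cX$ extending $D$; one also carries along the section through $x$ so that every fiber has a rational point. Since $X$ admits the conic $D$, the variety of conics $R_2(X)$ is nonempty, so such a family with a marked conic genuinely deforms. The point of passing to the family is that the two genericity hypotheses of Proposition~\ref{prop:genericityrat} -- existence of a point satisfying Takeuchi's conditions, and existence of an auxiliary curve $M$ of degree prime to three avoiding the flopping locus -- are open conditions that hold on a dense (Zariski-open) subset of $B$, even though they may fail for $X$ itself.

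Next I would produce, on the generic fiber $\cX_\eta$ over the function field $k(B)$, the curve $M$ of degree prime to three. The natural candidate comes from the conic bundle structure already attached to $D$: by Section~\ref{subsect:conic}, $X$ (hence each nearby fiber) is birational over $k$ to a conic bundle $\widetilde{X}^+ \ra \bP^2$ with quartic degeneracy curve, and pulling back a general line in $\bP^2$ -- or better, using a multisection of controlled degree -- gives curves whose intersection with the sextic del Pezzo fibers of the Takeuchi fibration can be computed. One needs a curve meeting the del Pezzo fiber class in a number prime to $3$; since $-K_X$ has degree $18 = 2\cdot 3^2$ this requires a little care, but conics themselves have degree $2$, and their proper transforms or suitable incidence curves should provide a class with intersection number $2$ (or otherwise coprime to $3$) against the del Pezzo fiber, away from the indeterminacy locus of the birational map. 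With both hypotheses verified on a dense open $U\subset B$, Proposition~\ref{prop:genericityrat} gives that $\cX_b$ is rational over $k(b)$ for all $b\in U$; in particular $\cX_\eta$ is rational over $k(B)$.

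Finally I would invoke \cite{KTspecialize}: rationality specializes in smooth proper families over a base with a $k$-point, so rationality of the geometric generic fiber (equivalently, of $\cX_b$ for $b$ in a dense set) forces rationality of the special fiber $X = \cX_0$. For this step one must ensure the total space $\cX$ can be chosen smooth and proper over $B$ with $X$ as a fiber, and that the family has enough members where Proposition~\ref{prop:genericityrat} applies -- this is why one works with a deformation of the pair $(X,D)$ rather than of $X$ alone, so that the conic-bundle structure (and with it the source of the curve $M$) persists in the family.

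The main obstacle I anticipate is the construction of the auxiliary curve $M$ of degree prime to three that is \emph{simultaneously} disjoint from the indeterminacy locus of $X \dashrightarrow \widetilde{X}^+$ -- i.e., missing the nine conics through the relevant point and the other loci blown up or flopped in the Takeuchi construction -- on the generic fiber. Controlling both the intersection number with the del Pezzo fibration and the disjointness from a codimension-two flopping locus is the delicate point; the conic $D$ gives degree $2$, but one must check it (or a modification built from the conic-bundle picture) can be moved off the indeterminacy locus on a dense set of fibers, which is where the deformation of the pair, rather than of $X$ alone, does the real work.
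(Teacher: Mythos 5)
Your proposal follows essentially the same route as the paper: put $(X,x,D)$ into a family of such triples, observe that on the generic fiber the hypotheses of Proposition~\ref{prop:genericityrat} hold automatically (the generic point satisfies Takeuchi's conditions by Proposition~\ref{prop:howgeneric}, and the conic itself is the curve $M$ of degree $2$ prime to three, generically disjoint from the indeterminacy locus), and then specialize rationality via \cite{KTspecialize}. The only ingredients the paper makes explicit that you leave implicit are the smoothness and connectedness of the parameter space of triples (using that $R_2(X)$ is an abelian surface and the moduli stack of degree-$18$ Fanos is smooth, connected, Deligne--Mumford), which is what guarantees your ``dense open'' of good fibers actually governs the given $X$, and the Bertini argument connecting the given $k$-point of the base to the generic point by a curve, since \cite[Th.~1]{KTspecialize} is stated over a curve.
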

\begin{proof}
Let $B$ denote the Hilbert scheme of all triples
$(X,x,D)$ of objects described in the statement. This is smooth and connected
over the moduli stack of degree $18$ Fano threefolds; indeed, we saw in
Section~\ref{subsect:conic} that the
parameter space of conics on $X$ is an abelian surface. The moduli
stack itself is a smooth Deligne-Mumford stack since
Kodaira vanishing gives $H^i(T_X)=0$ for $i=2,3$ and $H^0(T_X)=0$ by
\cite{ProkhorovAut}. The classification of Fano threefolds shows that
the moduli stack is connected. Thus $B$ is smooth and connected.

Consider the 
universal family
$$(\cX\stackrel{\pi}{\ra} B, \mathbf{x}:B\ra \cX, \cD\subset \cX), $$
where $\pi$ is smooth and projective. The generic fiber of
$\pi$ is rational over $k(B)$ 
as the genericity conditions of Proposition~\ref{prop:genericityrat} are
tautologically satisfied -- see Proposition~\ref{prop:howgeneric} for details.
The specialization theorem \cite[Th.~1]{KTspecialize} implies that every
$k$-fiber of $\pi$ is rational over $k$. This theorem assumes that the base
is a curve. However, our parameter space $B$ is smooth so Bertini's Theorem 
implies that each $b\in B(k)$ may be connected to the generic point by a curve
smooth at $b$.  
\end{proof}

\section{Analysis of principal homogeneous spaces}
\subsection{Proof of Theorem~\ref{theo:main}}
One direction is Theorem~\ref{theo:speciality}; we focus on the
converse.
Suppose that $X$ is rational over the ground field. 
Let $C$ be the genus two curve whose Jacobian $J(C)$ is isomorphic
to the intermediate Jacobian $IJ(X)$ over $k$. 
The mechanism of \cite[\S~5]{HTcycle} gives a principal homogeneous space
$P$ over $J(C)$ with the property that the Hilbert scheme $\cH_d$
parametrizing
irreducible curves of degree $d$ admits a morphism 
$$\cH_d \ra P_d$$
descending the Abel-Jacobi map to $k$,
where $[P_d]=d[P]$ in the Weil-Ch\^atelet group of $J(C)$. 
By Theorem~22 of \cite{HTcycle}, if $X$ is rational then 
$P \simeq \Pic^i(C)$ for $i=0$ or $1$. 
In particular, we have 
$$R_1(X) \hookrightarrow P$$
and by the known results of Section~\ref{subsect:conic}
$$R_2(X) \simeq P_2 \simeq J(C).$$
Indeed, since $C$ has genus two we have identifications
$$J(C) = \Pic^0(C) \simeq \Pic^2(C),$$
which gives the desired interpretation of $P_2$ whether
$P=\Pic^0(C)$ or $\Pic^1(C)$. As a consequence, $R_2(X)$
admits a $k$-rational point. 

\subsection{A corollary to Theorem~\ref{theo:main}}
Retain the notation of the previous section.
Without assumptions on the existence of points or conics
on $X$ defined over $k$, we know that 
$$18[P]=0 \text{ and } 9[R_2(X)]=0$$
in the Weil-Ch\^atelet group.
This allows us to deduce an extension of our main result:
\begin{coro}
Let $X$ be Fano threefold of degree of degree $18$ over $k$
with $X(k)\neq \emptyset$. Suppose that $X$ admits a curve of
degree prime to three, defined over $k$. Then $X$ is rational.
\end{coro}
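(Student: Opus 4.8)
The plan is to deduce the hypothesis of Theorem~\ref{theo:main} -- that $X$ carries a conic defined over $k$ -- from the arithmetic of the torsor $P$ recorded above, and then invoke that theorem, which applies because $X(k)\neq\emptyset$ by assumption. Since $R_2(X)$ parametrizes the conics on $X$, it is enough to show that the torsor $R_2(X)$ over $J(C)$ has a $k$-rational point, equivalently that $[R_2(X)]=0$ in the Weil-Ch\^atelet group. I would use two inputs already in hand: the relation $18[P]=0$; and the identification $R_2(X)\simeq P_2$ from Section~\ref{subsect:conic} together with $[P_d]=d[P]$, which gives $[R_2(X)]=2[P]$. (Either of the recorded relations $18[P]=0$, $9[R_2(X)]=0$ will do; I use the first.)

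Now let $M\subset X$ be a curve of degree $d$ defined over $k$ with $3\nmid d$. First I would observe that the Abel-Jacobi construction of \cite[\S~5]{HTcycle} sends the Galois-invariant effective $1$-cycle $[M]$ to a $k$-rational point of the degree-$d$ torsor $P_d$, whose class is $d[P]$; hence $d[P]=0$. Since $3\nmid d$ we have $\gcd(d,18)\mid 2$, so $2=a\,d+18\,b$ for suitable integers $a,b$, and therefore $2[P]=a\,(d[P])+b\,(18[P])=0$. Consequently $[R_2(X)]=2[P]=0$, so $R_2(X)$ has a $k$-point, $X$ carries a conic over $k$, and Theorem~\ref{theo:main} yields the rationality of $X$.

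The only step needing care is the passage from an arbitrary $k$-rational curve to $d[P]=0$: here $M$ need not be geometrically irreducible, nor even reduced, so one must know the cycle-class formalism of \cite{HTcycle} is additive and compatible with the grading, so that the fundamental cycle $[M]$ genuinely yields a $k$-point of $P_d$. Alternatively one can decompose $M$ into its $k$-irreducible components, choose one of degree prime to three (one exists, since the degrees sum to $d$), and apply the argument to a Galois orbit of its geometric components via the addition maps $P_a+P_b\to P_{a+b}$. Everything else is the elementary divisibility computation above, with no geometric input beyond Theorem~\ref{theo:main} and the structure of $R_2(X)$ recalled in Section~\ref{subsect:conic}.
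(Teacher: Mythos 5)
Your argument is exactly the paper's: the paper's (very terse) proof likewise deduces $d[P]=0$ from the given curve via the Abel--Jacobi map to $P_d$, combines it with $18[P]=0$ to get $2[P]=0$, concludes $[R_2(X)]=0$ so a conic exists over $k$, and invokes Theorem~\ref{theo:main}. Your added remarks on reducible or nonreduced curves are a reasonable elaboration of a point the paper leaves implicit, but the route is the same.
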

Our assumption means that $2[P]=0$, whence $[R_2(X)]=0$ and
$X$ admits a conic defined over $k$. Hence Theorem~\ref{theo:main}
applies.

\subsection{Generic behavior}
There are examples over function fields where the
principal homogeneous space is not annihilated
by two:
\begin{prop}
Over $k=\bC(\bP^2)$, there exist examples of $X$ such that
the order of $[P]$ is divisible by three.
\end{prop}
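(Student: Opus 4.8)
The plan is to exhibit a family of degree $18$ Fano threefolds over $k=\bC(\bP^2)$ whose associated intermediate Jacobian torsor $[P]$ has order divisible by three. The natural strategy is to build the example on the genus-two curve side: start from a suitable abelian surface $A$ over $k$ that occurs as $J(C)$ for a genus two curve $C$, together with an explicit three-torsion obstruction class $\alpha \in H^1(k, A)$ of order exactly divisible by three (e.g.\ order $3$ or $9$), and then realize a family of Fano threefolds $X/k$ whose variety of conics $R_2(X)$ is the torsor corresponding to $2\alpha$ (equivalently $[P]=\alpha$, using $[R_2(X)]=2[P]$ as in the preceding corollary's analysis). First I would recall that over $\bC(\bP^2)$ the Tate--Shafarevich-type group $H^1(\bC(\bP^2), A)$ of a nonconstant abelian surface is typically large; concretely one can take $A$ to be the Jacobian of a genus two curve moving in a Lefschetz pencil over $\bP^2$ and use the geometry of that pencil to produce classes of order three. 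Second, I would produce from $(C,\alpha)$ an actual threefold $X$: by the results of Section~\ref{subsect:conic}, $R_2(X)$ is geometrically $J(C)$, and by the mechanism of \cite{HTcycle} the torsor structure of $R_2(X)$ is $2[P]$, so it suffices to arrange that the intermediate Jacobian torsor $[P]$ of some $X$ equals the prescribed $\alpha$; this is where I would invoke the period/torsor dictionary of \cite{HTcycle} together with a surjectivity (or at least nonemptiness-of-fiber) statement for the map from the moduli of such $X$ to the relevant Weil--Ch\^atelet group.

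The key steps, in order, are: (1) fix a genus two curve $C/k$ with $k=\bC(\bP^2)$ for which $H^1(k,J(C))$ contains an element $\alpha$ of order divisible by three --- I would do this by spreading out over $\bP^2$, taking a family $\cC \ra \bP^2$ of genus two curves so that $J(\cC/\bP^2)$ has a three-torsion multisection giving the desired class, and checking via a local computation at the discriminant that the class is genuinely nontrivial modulo three; (2) use the constructions recalled in Sections~\ref{subsect:conic} and the cycle-class machinery of \cite{HTcycle} to pass from $(C,\alpha)$ to a degree $18$ Fano threefold $X/k$ with $[P]=\alpha$ in $\mathrm{WC}(J(C))$, so that by $18[P]=0$ the order of $[P]$ divides $18$ but is divisible by $3$; (3) verify smoothness of $X$ (generic members of the relevant linear system are smooth, since the bad locus has positive codimension) and that $X$ is defined over $k$ rather than merely over $\bar k$, which follows from Galois-descent once the torsor data is defined over $k$. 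Along the way I would contrast this with the rational case: by Theorem~22 of \cite{HTcycle} (as used in Section~\ref{subsect:conic}) rationality forces $P\simeq\Pic^0(C)$ or $\Pic^1(C)$, so an example with $[P]$ of order divisible by three is automatically irrational, and this is exactly the phenomenon the proposition is meant to display.

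The main obstacle I anticipate is step (2): showing that the prescribed torsor class $\alpha$ is actually \emph{realized} by an honest smooth Fano threefold of degree $18$, as opposed to just being abstractly compatible with the Weil--Ch\^atelet constraints. Geometrically this amounts to inverting the assignment $X \mapsto (J(C), [P])$, i.e.\ a Torelli-type surjectivity for the intermediate Jacobian torsor in this family; over $\bC$ the parameter count is favorable (the moduli of $X$ and the data $(C,\eta)$ with $\eta$ a degree-six theta characteristic or similar match dimensions), but over $k=\bC(\bP^2)$ one must track everything Galois-equivariantly, and the potential failure is that the period map could miss the class $\alpha$ or only hit it after a finite base change. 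I would handle this by working in families over $\bP^2$: construct the universal object $\cX \ra \bP^2$ directly from $\cC \ra \bP^2$ together with the chosen three-torsion data, so that the torsor class of the generic fiber is forced to be $\alpha$ by construction rather than by an existence theorem, and then a specialization/monodromy argument over $\bP^2$ confirms the order is divisible by three. The remaining steps (1) and (3) are comparatively routine: (1) is a standard computation with Jacobians of genus two curves in a pencil, and (3) follows from Bertini together with the fact, noted in Section~2, that the singular locus in moduli has positive codimension.
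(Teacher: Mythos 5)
Your step (2) is a genuine gap, and it is the heart of the matter. You propose to fix a class $\alpha\in H^1(k,J(C))$ of order divisible by three and then ``realize'' it as the intermediate-Jacobian torsor $[P]$ of a smooth degree-$18$ Fano threefold over $k=\bC(\bP^2)$. But every dictionary available here -- the Abel--Jacobi/torsor mechanism of \cite{HTcycle}, the identification $R_2(X)\simeq J(C)$ from \cite{IlMa,KuzPro} -- goes from $X$ to $(C,[P])$; there is no Torelli-type surjectivity for torsors, nor any known recipe producing a degree-$18$ Fano threefold from a genus-two curve together with prescribed Weil--Ch\^atelet data. Your proposed remedy, ``construct $\cX\ra\bP^2$ directly from $\cC\ra\bP^2$ together with the chosen three-torsion data, so that the torsor class of the generic fiber is forced by construction,'' is precisely the construction that is missing; as written it has no content, and you yourself flag it as the main obstacle without supplying an argument. (Step (1) is also only sketched: extracting a class of order three in $H^1(\bC(\bP^2),J(\cC))$ from a ``three-torsion multisection'' of an abelian surface fibration over $\bP^2$ requires a real argument, though this is secondary.)

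The paper avoids the realization problem entirely by building the family explicitly and then proving nontriviality of its torsor. It takes a K3 surface $S$ with $\Pic(S)=\bZ h$, $h^2=18$, realized by Mukai as a codimension-three linear section of the $G_2$-homogeneous space $W\subset\bP^{13}$, and forms the net $\cX\ra\bP^2$ of degree-$18$ Fano threefolds cut out by the $\bP^{11}$'s containing the $\bP^{10}$ defining $S$; the relative variety of conics is birational to $S^{[2]}$ fibered over $\bP^2$, and since $9[R_2(X)]=0$ it suffices to show this abelian fibration has no section. Nontriviality is then proved by hyperk\"ahler geometry: the associated Jacobian fibration is birational to the degree-two linear system fibration attached to an auxiliary K3 surface $T$ (a moduli space of rank-three bundles on $S$) whose primitive $H^2$ contains that of $S$ as an index-three sublattice; a section would make $S^{[2]}$ and this Jacobian fibration birational, and the Torelli theorem would force their transcendental lattices to agree, contradicting the index-three inclusion. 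To salvage your route you would need an actual construction of $X$ from $(C,\alpha)$ -- i.e.\ an inverse to $X\mapsto(J(C),[P])$ at the level of torsors -- which is not available; without it the argument does not get off the ground.
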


\begin{proof}
Let $S$ be a complex K3 surface with $\Pic(S)=\bZ h$, with
$h^2=18$. Mukai \cite{Mukai} has shown that $S$
is a codimension-three linear section of a homogeneous
space $W \subset \bP^{13}$ arising as the closed orbit
for the adjoint representation of $G_2$
$$S = \bP^{10} \cap  W.$$
Consider the associated net of Fano threefolds
$$
\varpi:\cX \ra \bP^2,
$$
obtained by intersecting $W$ with codimension-two linear
subspaces 
$$\bP^{10} \subset \bP^{11} \subset \bP^{13}.$$
Write $X$ for the generic fiber over $\bC(\bP^2)$.

Let $R_2(\cX/\bP^2)$ denote the relative variety of conics. 
This was analyzed in \cite[\S~3.1]{IlMa}: 
The conics in fibers of $\varpi$ cut out pairs of points on $S$,
yielding a birational identification and natural abelian fibration
$$S^{[2]} \stackrel{\sim}{\dashrightarrow} R_2(\cX/\bP^2) 
\stackrel{\psi}{\ra} \bP^2.$$
The corresponding principal homogeneous space has order 
divisible by nine; its order is divisible by three if it is
nontrivial.

These fibrations are analyzed in more depth in \cite[\S~3.3]{MSTVA}
and \cite{KR}.
Let $T$ denote the moduli space of rank-three stable 
vector bundles $V$ on $S$ with $c_1(V)=h$ and $\chi(V)=6$.
Then we have
\begin{itemize}
\item{$T$ is a K3 surface of degree two;}
\item{the primitive cohomology of $S$ arises as an index-three
sublattice of the primitive cohomology of $T$
$$H^2(S,\bZ)_{prim} \subset H^2(T,\bZ)_{prim},$$
compatibly with Hodge structures;}
\item{the Hilbert scheme $T^{[2]}$ is birational to
the relative Jacobian
fibration of the degree-two linear series on $T$
$$\cJ \ra \bP^2;$$
}
\item{the relative Jacobian fibration of $\psi$
is birational to $\cJ$ over $\bP^2$.}
\end{itemize}
The last statement follows from \cite[p.~486]{Sawon} or
\cite[\S~4]{Mark}: The abelian fibration $\psi$
is realized as a twist of the 
fibration $\cJ \ra \bP^2$; the twisting data is
encoded by an element $\alpha \in \Br(T)[3]$ annihilating
$H^2(S,\bZ)_{prim}$ modulo three.  

Now suppose that $\psi$ had a section. Then $\cJ$ and 
$S^{[2]}$ would be birational holomorphic symplectic varieties.
The Torelli Theorem implies that
their transcendental degree-two cohomology
-- $H^2(T,\bZ)_{prim}$ and $H^2(S,\bZ)_{prim}$ respectively --
are isomorphic. This contradicts our computation above.
\end{proof}

\subsection{Connections with complete intersections?}
Assume $k$ is algebraically closed and $X$ a Fano threefold of degree
$18$ over $k$. Kuznetsov, Prokhorov, and Shramov \cite{KuzPro} have
pointed out the existence of a smooth complete intersection of
two quadrics $Y\subset \bP^5$ with
\begin{equation} \label{eqn:XYiso}
R_1(Y) \simeq R_2(X),
\end{equation}
Both have intermediate Jacobian isomorphic to the Jacobian
of a genus two curve $C$.

Now suppose that $X$ and $Y$ are defined over a nonclosed field $k$
with $IJ(X)\simeq IJ(Y)$. In general, we would not expect
$R_2(X)$ and $R_1(Y)$ to be related as principal homogeneous
spaces; for example, we generally have
$9[R_2(X)]=0$ and $4[R_1(Y)]=0$ (see \cite{HT2quad}).

Verra \cite{VerraSlide} has found a direct connection between
complete intersections of quadrics and {\em singular} Fano threefolds
of degree $18$. Suppose we have a twisted cubic curve
$$R \subset Y \subset \bP^5,$$
which forces $Y$ to be rational.
Consider the linear series of quadrics vanishing along $R$;
the resulting morphism
$$\Bl_R(Y) \ra \bP^{11}$$
collapses the line residual to $R$ in $\spa(R)\cap Y$.
Its image $X_0$ is a nodal Fano threefold of degree $18$.

\bibliographystyle{alpha}
\bibliography{degree18}
\end{document}